\renewcommand{\subsection}{\@startsection{subsection}{1}{0pt}{-3.25ex plus -1ex minus-.2ex}{1.5ex plus.2ex}{\normalfont\it}}
\renewcommand{\section}{\@startsection{section}{1}{\parindent}{3.5ex plus 1ex minus .2ex}{2.3ex plus.2ex}{\sc}}
\renewcommand{\phi}{\varphi}
\renewcommand{\leq}{\leqslant}
\renewcommand{\geq}{\geqslant}
\renewcommand{\epsilon}{\varepsilon}
\renewcommand{\kappa}{\varkappa}
\DeclareMathOperator{\spec}{Spec}
 \DeclareMathOperator{\cyl}{cyl}
 \DeclareMathOperator{\mot}{mot}
 \DeclareMathOperator{\cone}{cone}
\DeclareMathOperator{\Hom}{Hom}
 \DeclareMathOperator{\colim}{colim}
 \DeclareMathOperator{\nis}{nis}
 \DeclareMathOperator{\Mod}{Mod}
 \DeclareMathOperator{\Ob}{Ob}
\newcommand{\cc}{\mathcal}
\newcommand{\bb}{\mathbb}
\newcommand{\op}{{\textrm{\rm op}}}
\newcommand{\sheff}{SH^{\mot}_{S^1}}
\newcommand{\shnis}{SH^{\nis}_{S^1}}
\newtheorem{thm}{Theorem}[section]
\newtheorem{cor}[thm]{Corollary}
\newtheorem{lem}[thm]{Lemma}
\newtheorem{rem}[thm]{Remark}
\newtheorem{defs}[thm]{Definition}
\begin{document}

\footskip30pt


\title{On the triangulated category of framed motives $\text{DFr}_{-}^{eff}(k)$}

\author{Ivan Panin}
\address{St. Petersburg Branch of V. A. Steklov Mathematical Institute,
Fontanka 27, 191023 St. Petersburg, Russia}
\email{paniniv@gmail.com}

\thanks{
}

\keywords{Motivic homotopy theory, framed correspondences, spectral
categories}

\subjclass[2010]{14F42, 19E08, 55U35}

\begin{abstract}
The category of framed correspondences $Fr_*(k)$ was invented by
Voevodsky \cite[Section 2]{Voe2} in order to give 
another framework for $\text{SH}(k)$ more amenable
to explicit calculations. 
Based on \cite{Voe2} and \cite{GP4} Garkusha and the author introduced in 
\cite[Section 2]{GP5}
a triangulated category of framed bispectra $\text{SH}_{nis}^{fr}(k)$. 
It is shown in \cite[Section 2]{GP5} that $\text{SH}_{nis}^{fr}(k)$ recover classical Morel--Voevodsky triangulated
categories of bispectra $\text{SH}(k)$.

For any infinite perfect field $k$ a triangulated category of $\bb {F}\text{r}$-motives
$\text{D}\bb {F}\text{r}_{-}^{eff}(k)$ is constructed in the style of Voevodsky's
construction of the category $\text{DM}_-^{eff}(k)$. 
In our approach the Voevodsky category of Nisnevich sheaves with transfers is replaced
with the category of $\bb {F}\text{r}$-modules.
To each smooth
$k$-variety $X$ the $\bb {F}\text{r}$-motive $\text{M}_{\bb {F}\text{r}}(X)$ is associated in the
category $\text{D}\bb {F}\text{r}_{-}^{eff}(k)$.

We identify the triangulated category $\text{D}\bb {F}\text{r}_{-}^{eff}(k)$ with the full triangulated subcategory $\text{SH}^{eff}_{-}(k)$
of the classical Morel--Voevodsky triangulated category $\text{SH}^{eff}(k)$ of effective motivic bispectra
\cite{Jar2}. Moreover, the triangulated category $\text{D}\bb {F}\text{r}_{-}^{eff}(k)$ is naturally 
{\it symmetric monoidal}. 
Particularly, 
$\text{M}_{\bb {F}\text{r}}(X)\otimes_{\bb {F}\text{r}} \text{M}_{\bb {F}\text{r}}(Y)=\text{M}_{\bb {F}\text{r}}(X\times Y)$.
The mentioned
identification of the triangulated categories respects the symmetric monoidal structures on both sides.

We work with the derived category $\text{D}\bb {F}\text{r}_-(k)$ of bounded below $\bb {F}\text{r}$-modules
rather than with the homotopy category $\text{SH}_{nis}(k)$ of bispectra as in \cite[Section 2]{GP5}.
\end{abstract}
\maketitle

\thispagestyle{empty} \pagestyle{plain}

\newdir{ >}{{}*!/-6pt/@{>}} 


\section{Introduction}

The Voevodsky triangulated category of motives
$\text{DM}_-^{eff}(k)$~\cite{Voe1} provides a natural framework to study
motivic cohomology.
In this paper a new short approach to constructing the part $\text{SH}^{eff}_{-}(k)$
of the classical triangulated
category $\text{SH}(k)$ is presented providing the base field is infinite and perfect.

We work in the framework of
strict $V$-spectral categories introduced in
\cite[Definition~\ref{vsp}]{GP2}
The main new feature of our  spectral category $\bb {F}\text{r}$
is that {\it it is symmetric monoidal}. It is also connective and Nisnevich excisive in the sense
of~\cite{GP}. 
Each $\pi_0(\bb {F}\text{r})$-presheaf $\cc F$ of Abelian groups 
is automatically a radditive framed presheaf of Abelian groups
in the sense of \cite{Voe2}. By \cite[Lemma 2.15]{GP3} such an $\cc F$ 
is a $\bb ZF_*(k)$-presheaf of Abelian groups 
in the sense of \cite[2.13]{GP3}. 
By \cite[Lemma 4.5]{Voe2} and \cite[Lemma 2.15]{GP3} its associated Nisnevich sheaf
$\cc F_{nis}$ is canonically a $\bb ZF_*(k)$-presheaf of Abelian groups. 
If $\cc F$ is homotopy invariant and stable in the sense of \cite{Voe2} 
(see also \cite[Def. 2.13, 2.14]{GP3}), then by 
\cite[Thm. 1.1]{GP3} the framed Nisnevich sheaf
$\cc F_{nis}$ is strictly homotopy invariant and stable.

The main symmetric monoidal strict $V$-spectral category $\bb {F}\text{r}$ is constructed in
Section \ref{The_Category}. It is strict over infinite perfect fields. 
Denote by $\text{D}\bb {F}\text{r}_-(k)$ the full triangulated subcategory of
$\text{SH}^{nis}(\bb {F}\text{r})$ of bounded below $\bb {F}\text{r}$-modules. We also denote by
$\text{D}\bb {F}\text{r}_-^{eff}(k)$
the full triangulated subcategory of
$\text{D}\bb {F}\text{r}_-(k)$
of those $\bb {F}\text{r}$-modules $M$ such that each
$\bb ZF_*(k)$-presheaf
$\pi_i(M)|_{\bb ZF_*(k)}$ is 
{\it homotopy invariant and stable} 
in the sense of
\cite[Def. 2.13, 2.14]{GP3}. 

We call $\text{D}\bb {F}\text{r}_-^{eff}(k)$
{\it the triangulated category of $\bb {F}\text{r}$-motives}.
The category $\text{D}\bb {F}\text{r}_{-}^{eff}(k)$ is naturally symmetric monoidal. 
For each 
$X\in Sm/k$ the $\bb {F}\text{r}$-module
$$C_*(\bb {F}\text{r}(X)):=|d\mapsto\underline{\Hom}(\Delta^d,\bb {F}\text{r}(X))|$$
belongs to $\text{D}\bb {F}\text{r}_{-}^{eff}(k)$ and is called 
{\it the} $\bb {F}\text{r}$-{\it motive of} $X$; \ 
$\text{M}_{\bb {F}\text{r}}(X)\otimes_{\bb {F}\text{r}} \text{M}_{\bb {F}\text{r}}(Y)=\text{M}_{\bb {F}\text{r}}(X\times Y)$.

The latter triangulated category {\it is identified} with the full triangulated subcategory $\text{SH}^{eff}_{-}(k)$
of the classical Morel--Voevodsky triangulated category $SH^{eff}(k)$ of effective motivic bispectra
({\it this is the main result of the preprint}). See Theorem \ref{VeryMain}.

The mentioned
identification respects {\it the symmetric monoidal structures} on both sides.

It can be shown that the identification triangulated functor as in Theorem \ref{VeryMain}
$$\bb M_{\text{SH}}: \text{D}\bb {F}\text{r}_-^{eff}(k)\to SH^{eff}_-(k)$$
takes the $\bb {F}\text{r}$-motive
$\text{M}_{\bb {F}\text{r}}(X)$ of $X$ to
the symmetric bispectrum
$\Sigma_{\bb G_m}\Sigma_{S^1}(X_+)$. 

Sections 2 and 3 contains the materials of 
\cite[Sections 2 and 3]{GP2}
adapted to the symmetric monoidal spectral category
$\bb {F}\text{r}$,
which is defined in Section 4. 
In Section 4 the language of triangulated categories is used 
as opposed to the model categories language. This allows to state all constructions
and results in a very explicit form. 
The main result here is Theorem \ref{neploho}. 
However it seems that this language does not allow to prove 
Theorem 6.2 (the main result of this preprint).

Also this language does not allow 
to state and prove the following true result: 
there is a triangulated equivalence of the triangulated categories
$$\text{SH}^{mot}(\bb {F}\text{r})\to \text{SH}^{mot}(k).$$ 

Triangulated subcategories $\text{SH}^{nis}(\bb {F}\text{r})$,
$\text{D}\bb {F}\text{r}_-(k)$ 
and 
$\text{D}\bb {F}\text{r}_-^{eff}(k)$
are defined in Section 5. 
The main result of the preprint (Theorem \ref{VeryMain})
is stated in Section 6. 
Its proof is postponed to the next preprint.

Throughout the paper we denote by $Sm/k$ the category of smooth
separated schemes of finite type over the base field $k$. The base field $k$
is supposed to be infinite and perfect. The paper 
\cite{DP} shows that there is no restriction on the characteristic of $k$. \\
{\bf Acknowledgements}. The author is very grateful to G.Garkusha for his deep interest 
in the topic of this preprint. I am very grateful also to my mother in law 
K.Shahbazian for 
her very stimulating interest to the present work on all its stages.

\section{Preliminaries}

We work in the framework of spectral categories and modules over
them in the sense of Schwede--Shipley~\cite{SS}. We start with
preparations. 

We follow \cite[Definition 2.1.1, Remark 2.1.5]{HSS}.
A symmetric sequence of objects in a category $\cc C$ is a functor $\Sigma \to \cc C$, and
the category of symmetric sequences of objects in $\cc C$ is the functor category $\cc C^{\Sigma}$. 
The category $\Sigma$ is a skeleton of the category of finite sets and
isomorphisms. Hence every symmetric sequence has an extension, which is unique
up to isomorphism, to a functor on the category of all finite sets and isomorphisms. 
We will use both view points (often the second one).

Recall that symmetric spectra have two sorts of homotopy groups
which we shall refer to as {\it naive\/} and {\it true homotopy
groups\/} respectively following terminology of~\cite{Sch}.
Precisely, the $k$th naive homotopy group of a symmetric spectrum
$X$ is defined as the colimit
   $$\hat\pi_k(X)=\colim_n\pi_{k+n}X_n.$$
Denote by $\gamma X$ a stably fibrant model of $X$ in $Sp^\Sigma$.
The $k$-th true homotopy group of $X$ is given by
   $$\pi_kX=\hat\pi_k(\gamma X),$$
the naive homotopy groups of the symmetric spectrum $\gamma X$.

Naive and true homotopy groups of $X$ can considerably be different
in general (see, e.g.,~\cite{HSS,Sch}). The true homotopy groups
detect stable equivalences, and are thus more important than the
naive homotopy groups. There is an important class of {\it
semistable\/} symmetric spectra within which
$\hat\pi_*$-isomorphisms coincide with $\pi_*$-isomorphisms. Recall
that a symmetric spectrum is semistable if some (hence any) stably
fibrant replacement is a $\pi_*$-isomorphism. Suspension spectra,
Eilenberg--Mac Lane spectra, $\Omega$-spectra or $\Omega$-spectra
from some point $X_n$ on are examples of semistable symmetric
spectra (see~\cite{Sch}).
Semistability
is preserved under suspension, loop, wedges and shift.

A symmetric spectrum $X$ is {\it $n$-connected\/} if the true
homotopy groups of $X$ are trivial for $k\geq n$. The spectrum $X$
is {\it connective\/} is it is $(-1)$-connected, i.e., its true
homotopy groups vanish in negative dimensions. $X$ is {\it bounded
below\/} if $\pi_i(X)=0$ for $i\ll 0$.

\begin{defs}\label{basic}{\rm
(1) Following~\cite{SS} a {\it spectral category\/} is a category
$\cc O$ which is enriched over the category $Sp^\Sigma$ of symmetric
spectra (with respect to smash product, i.e., the monoidal closed
structure of \cite[2.2.10]{HSS}). In other words, for every pair of
objects $o,o'\in\cc O$ there is a morphism symmetric spectrum $\cc
O(o,o')$, for every object $o$ of $\cc O$ there is a map from the
sphere spectrum $S$ to $\cc O(o,o)$ (the ``identity element" of
$o$), and for each triple of objects there is an associative and
unital composition map of symmetric spectra $\cc O(o',o'')\wedge\cc
O(o,o') \to\cc O(o,o'')$. An $\cc O$-module $M$ is a contravariant
spectral functor to the category $Sp^\Sigma$ of symmetric spectra,
i.e., a symmetric spectrum $M(o)$ for each object of $\cc O$
together with coherently associative and unital maps of symmetric
spectra $M(o)\wedge\cc O(o',o)\to M(o')$ for pairs of objects
$o,o'\in\cc O$. A morphism of $\cc O$-modules $M\to N$ consists of
maps of symmetric spectra $M(o)\to N(o)$ strictly compatible with
the action of $\cc O$. The category of $\cc O$-modules will be
denoted by $\Mod\cc O$.

(2) A {\it spectral functor\/} or a {\it spectral homomorphism\/}
$F$ from a spectral category $\cc O$ to a spectral category $\cc O'$
is an assignment from $\Ob\cc O$ to $\Ob\cc O'$ together with
morphisms $\cc O(a,b)\to\cc O'(F(a),F(b))$ in $Sp^\Sigma$ which
preserve composition and identities.

(3) The {\it monoidal product\/} $\cc O\wedge\cc O'$ of two spectral
categories $\cc O$ and $\cc O'$ is the spectral category where
$\Ob(\cc O\wedge\cc O'):=\Ob\cc O\times\Ob\cc O'$ and $\cc
O\wedge\cc O'((a,x),(b,y)):= \cc O(a,b)\wedge\cc O'(x,y)$.

(3') A monoidal spectral category consists of a spectral category $\cc O$ equipped with
a spectral functor
$\diamond: \cc O \wedge \cc  O \to \cc O$,
a unit $u \in Ob \cc O$, a $Sp^{\Sigma}$-natural associativity isomorphism
and two $Sp^{\Sigma}$-natural unit isomorphisms. Symmetric monoidal spectral
categories are defined similarly.


(4) A spectral category $\cc O$ is said to be {\it connective\/} if
for any objects $a,b$ of $\cc O$ the spectrum $\cc O(a,b)$ is
connective.

(5) By a ringoid over $Sm/k$ we mean a preadditive category $\cc R$
whose objects are those of $Sm/k$ together with a functor
   $$\rho:Sm/k\to\cc R,$$
which is identity on objects. Every such ringoid gives rise to a
spectral category $\cc O_{\cc R}$ whose objects are those of $Sm/k$
and the morphisms spectrum $\cc O_{\cc R}(X,Y)$, $X,Y\in Sm/k$, is
the Eilenberg--Mac~Lane spectrum $H\cc R(X,Y)$ associated with the
abelian group $\cc R(X,Y)$. Given a map of schemes $\alpha$, its
image $\rho(\alpha)$ will also be denoted by $\alpha$, dropping
$\rho$ from notation.

(6) By a spectral category over $Sm/k$ we mean a spectral category
$\cc O$ whose objects are those of $Sm/k$ together with a spectral
functor
   $$\sigma:\cc O_{naive}\to\cc O,$$
which is identity on objects. Here $\cc O_{naive}$ stands for the
spectral category whose morphism spectra are defined as
   $$\cc O_{naive}(X,Y)_p=\Hom_{Sm/k}(X,Y)_+\wedge S^p$$
for all $p\geq 0$ and $X,Y\in Sm/k$.

It is straightforward to verify that the category of $\cc
O_{naive}$-modules can be regarded as the category of presheaves
$Pre^\Sigma(Sm/k)$ of symmetric spectra on $Sm/k$. This is used in
the sequel without further comment.

}\end{defs}

Let $\cc O$ be a spectral category and let $\Mod\cc O$ be the
category of $\cc O$-modules. Recall that the projective stable model
structure on $\Mod\cc O$ is defined as follows (see~\cite{SS}). The
weak equivalences are the objectwise stable weak equivalences and
fibrations are the objectwise stable projective fibrations. The
stable projective cofibrations are defined by the left lifting
property with respect to all stable projective acyclic fibrations.

Recall that the Nisnevich topology is generated by elementary
distinguished squares, i.e. pullback squares
   \begin{equation}\label{squareQ}
    \xymatrix{\ar@{}[dr] |{\textrm{$Q$}}U'\ar[r]\ar[d]&X'\ar[d]^\phi\\
              U\ar[r]_\psi&X}
   \end{equation}
where $\phi$ is etale, $\psi$ is an open embedding and
$\phi^{-1}(X\setminus U)\to(X\setminus U)$ is an isomorphism of
schemes (with the reduced structure). Let $\cc Q$ denote the set of
elementary distinguished squares in $Sm/k$ and let $\cc O$ be a
spectral category over $Sm/k$. By $\cc Q_{\cc O}$ denote the set of
squares
   \begin{equation}\label{squareOQ}
    \xymatrix{\ar@{}[dr] |{\textrm{$\cc O Q$}}\cc O(-,U')\ar[r]\ar[d]&\cc O(-,X')\ar[d]^\phi\\
              \cc O(-,U)\ar[r]_\psi&\cc O(-,X)}
   \end{equation}
which are obtained from the squares in $\cc Q$ by taking $X\in Sm/k$
to $\cc O(-,X)$. The arrow $\cc O(-,U')\to\cc O(-,X')$ can be
factored as a cofibration $\cc O(-,U')\rightarrowtail Cyl$ followed
by a simplicial homotopy equivalence $Cyl\to\cc O(-,X')$. There is a
canonical morphism $A_{\cc O Q}:=\cc O(-,U)\bigsqcup_{\cc O(-,U')}
Cyl\to\cc O(-,X)$.

\begin{defs}[see~\cite{GP}]{\rm
I. The {\it Nisnevich local model structure\/} on $\Mod\cc O$ is the
Bousfield localization of the stable projective model structure with
respect to the family of projective cofibrations
   \begin{equation*}\label{no}
    \cc N_{\cc O}=\{\cyl(A_{\cc O Q}\to\cc O(-,X))\}_{\cc Q_{\cc O}}.
   \end{equation*}
The homotopy category for the Nisnevich local model structure will
be denoted by $\shnis\cc O$. In particular, if $\cc O=\cc O_{naive}$
then we have the Nisnevich local model structure on
$Pre^\Sigma(Sm/k)=\Mod\cc O_{naive}$ and we shall write $\shnis(k)$
to denote $\shnis\cc O_{naive}$.

II. The {\it motivic model structure\/} on $\Mod\cc O$ is the
Bousfield localization of the Nisnevich local model structure with
respect to the family of projective cofibrations
   \begin{equation*}\label{ao}
    \cc A_{\cc O}=\{\cyl(\cc O(-,X\times\bb A^1)\to\cc O(-,X))\}_{X\in Sm/k}.
   \end{equation*}
The homotopy category for the motivic model structure will be
denoted by $\sheff\cc O$. In particular, if $\cc O=\cc O_{naive}$
then we have the motivic model structure on
$Pre^\Sigma(Sm/k)=\Mod\cc O_{naive}$ and we shall write write
$\sheff(k)$ to denote $\sheff\cc O_{naive}$.

}\end{defs}

\begin{defs}[see~\cite{GP}]\label{Nis_and_Mot_exc}{\rm
I. We say that $\cc O$ is {\it Nisnevich excisive\/} if for every
elementary distinguished square $Q$
    \begin{equation*}
    \xymatrix{\ar@{}[dr] |{\textrm{$Q$}}U'\ar[r]\ar[d]&X'\ar[d]^\phi\\
              U\ar[r]_\psi&X}
   \end{equation*}
the square $\cc O Q$~\eqref{squareOQ} is homotopy pushout in the
Nisnevich local model structure on $Pre^\Sigma(Sm/k)$.

II. $\cc O$ is {\it motivically excisive\/} if:

\begin{itemize}
\item[(A)] for every elementary distinguished square $Q$ the square $\cc O
Q$~\eqref{squareOQ} is homotopy pushout in the motivic model
structure on $Pre^\Sigma(Sm/k)$ and

\item[(B)] for every $X\in Sm/k$ the natural map
   $$\cc O(-,X\times\bb A^1)\to\cc O(-,X)$$
is a weak equivalence in the motivic model structure on
$Pre^\Sigma(Sm/k)$.
\end{itemize}

}\end{defs}

Recall that a sheaf $\cc F$ of abelian groups in the Nisnevich
topology on $Sm/k$ is {\it strictly $\bb A^1$-invariant\/} if for
any $X\in Sm/k$, the canonical morphism
   $$H^*_{\nis}(X,\cc F)\to H^*_{\nis}(X\times\bb A^1,\cc F)$$
is an isomorphism.

\begin{defs}\label{vsp}{\rm
Let $(\cc O, \diamond, pt)$ be a {\it symmetric monoidal} spectral category over $Sm/k$ together with the
structure spectral functor $\sigma:\cc O_{naive}\to\cc O$
and an additive functor
$\bb ZF_*(k)\xrightarrow{\epsilon}\pi_0\cc O$. We say
that $((\cc O,\diamond,pt),\sigma,\epsilon)$ is a {\it symmetric monoidal $V$-spectral category\/} if

\begin{enumerate}
\item $\cc O$ is connective and Nisnevich excisive;
\item the structure map $\rho: Sm/k \to \pi_0\cc O$ induced by $\sigma$ equals $\epsilon \circ in$, 
where $in: Sm/k \to \bb ZF_*(k)$ is the graphic functor.
\end{enumerate}
}
\end{defs}

\begin{rem}\label{additivity} {\rm
Since $\cc O$ is connective and Nisnevich excisive, for each
$\cc O$-module $M$ and each integer $i$ the presheaf
$\pi_i(M)|_{Sm/k}$ is {\it radditive} (the restriction is taken via the $\rho$).
That is $\pi_i(M)(\emptyset)=0$ and $\pi_i(M)(X_1\sqcup X_2)=\pi_i(M)(X_1)\times \pi_i(M)(X_2)$.
Particularly, the functor $\pi_i(M)|_{\bb ZF_*(k)}$ is additive. So, $\pi_i(M)|_{\bb ZF_*(k)}$ is
a {\it presheaf of Abelian groups on} $\bb ZF_*(k)$
in the sense of \cite[Def. 2.13]{GP3}
(the restriction is taken via the $\epsilon$).
}
\end{rem}

We note that if $(\cc O, \diamond, pt)$ is a symmetric monoidal spectral category over $Sm/k$, 
then for every
$\cc O$-module $M$ and any smooth scheme $U$, the presheaf of
symmetric spectra
   $$\underline{\Hom}(U,M):=M(-\times U)$$
is an $\cc O$-module. Moreover, $M(-\times U)$ is functorial in $U$.

\begin{lem}\label{pepe}
Every symmetric monoidal $V$-spectral category $\cc O$ is motivically excisive.
\end{lem}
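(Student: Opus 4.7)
The plan is to verify conditions (A) and (B) of Definition~\ref{Nis_and_Mot_exc}.II in turn. Condition (A) should be essentially formal: since $\cc O$ is Nisnevich excisive by the $V$-spectral hypothesis, for every elementary distinguished square $Q$ the induced square $\cc O Q$ is already a homotopy pushout in the Nisnevich local model structure on $Pre^\Sigma(Sm/k)$. Because the motivic model structure is a Bousfield localization of the Nisnevich local one, the class of motivic weak equivalences contains all Nisnevich equivalences, so every Nisnevich homotopy pushout is a motivic homotopy pushout.

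For condition (B), fix $X \in Sm/k$ and let $p: X \times \bb A^1 \to X$ be the projection. The strategy would have three steps. First, I would reduce the motivic equivalence question for $p_*: \cc O(-, X\times \bb A^1) \to \cc O(-, X)$ to a Nisnevich-local statement by passing through the simplicial Suslin complex $C_*M(U):=|d \mapsto M(U\times\Delta^d)|$: the canonical map $M \to C_*M$ is a motivic equivalence (each $\Delta^d$ is $\bb A^1$-contractible) while $C_*M$ is homotopy invariant on the nose, so it suffices to prove that $C_*(p_*)$ is a Nisnevich-local equivalence. Second, I would analyse the induced maps on Nisnevich sheaves of stable homotopy groups: by Remark~\ref{additivity} each presheaf $\pi_i\cc O(-, Y)|_{\bb ZF_*(k)}$ is automatically a $\bb ZF_*(k)$-presheaf, and after applying $C_*$ these become homotopy invariant, while framed stability is inherited from the spectral structure via $\epsilon : \bb ZF_*(k) \to \pi_0\cc O$. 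Third, I would invoke \cite[Thm.~1.1]{GP3}, which then forces the corresponding Nisnevich sheaves to be strictly $\bb A^1$-invariant; on those sheaves the scheme-level $\bb A^1$-homotopy $H: X\times\bb A^1\times\bb A^1 \to X\times\bb A^1$, $(x,t,u)\mapsto (x,tu)$, between $\id_{X\times\bb A^1}$ and $s_0\circ p$ (with $s_0$ the zero section) collapses to an equality. Combined with $p\circ s_0 = \id_X$, this forces $p_*$ to be an isomorphism on $\pi_*(C_*(-))_{nis}$, so $C_*(p_*)$ is a Nisnevich equivalence.

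The main obstacle I anticipate is the second step, namely checking that $\pi_i C_*\cc O(-, X)$ genuinely satisfies both the homotopy invariance and the stability hypotheses demanded by \cite[Thm.~1.1]{GP3} in the sense of \cite[Def.~2.13, 2.14]{GP3}. Homotopy invariance should follow routinely from the $C_*$ construction, but framed stability requires careful bookkeeping of the $\bb ZF_*(k)$-action through the $\cc O$-module structure; at this point I would expect to exploit the symmetric monoidal structure of $\cc O$ to propagate framed suspensions compatibly, ensuring that the stability datum coming from $\epsilon$ survives the passage to $C_*\cc O(-, X)$ and its homotopy presheaves.
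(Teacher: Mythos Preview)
Your treatment of (A) is fine and matches the paper: Nisnevich excisiveness is built into the $V$-spectral hypothesis, and homotopy pushouts survive Bousfield localization.

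For (B) the paper's route is far shorter than yours. Just before the lemma the paper records the single consequence of the symmetric monoidal structure it actually needs: for every $\cc O$-module $M$ and every $U\in Sm/k$ the presheaf $\underline{\Hom}(U,M)=M(-\times U)$ is again an $\cc O$-module. This ``action of smooth schemes on $\cc O$'' is then fed directly into the argument of \cite[5.8]{GP}, bypassing $C_*$ and \cite{GP3} entirely. Concretely, the monoidal pairing transports the target-side homotopy $H$ into the source slot: sending $\alpha\in\cc O(V,X\times\bb A^1)$ to $H_*\circ(\alpha\diamond\id_{\bb A^1})\in\cc O(V\times\bb A^1,X\times\bb A^1)$ yields a map $\cc O(-,X\times\bb A^1)\to\underline{\Hom}(\bb A^1,\cc O(-,X\times\bb A^1))$ restricting to $(s_0 p)_*$ and $\id$ at $0,1$. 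That is an elementary $\bb A^1$-homotopy in $Pre^\Sigma(Sm/k)$, hence a motivic equivalence straight from the definition of the localization.

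This is also where your plan has a real gap. In step~3 you appeal to strict $\bb A^1$-invariance of $\pi_i^{\nis}C_*\cc O(-,Y)$ to make $H$ ``collapse to an equality'', but strict $\bb A^1$-invariance is a statement about the \emph{source} variable ($H^*_{\nis}(U,-)\cong H^*_{\nis}(U\times\bb A^1,-)$), whereas your $H$ lives entirely in the \emph{target} variable $Y$. Nothing in the strict-invariance package forces two target-side $\bb A^1$-homotopic maps $Y\to Y$ to induce the same endomorphism of the sheaf. The only way to move that extra $\bb A^1$ from target to source is the monoidal pairing $\alpha\mapsto\alpha\diamond\id_{\bb A^1}$---exactly the action of smooth schemes the paper isolates---and once you make that move you already have the motivic equivalence without ever touching $C_*$, framed stability, or \cite[Thm.~1.1]{GP3}. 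So the symmetric monoidal structure is needed not for step~2 as you anticipate, but to make step~3 work at all; and deployed there, it short-circuits the whole detour.
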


\begin{proof}
Every symmetric monoidal $V$-spectral category is, by definition, Nisnevich excisive.
Since there is an action of smooth schemes on $\cc O$, the
fact that $\cc O$ is motivically excisive is proved similar
to~\cite[5.8]{GP}. 
\end{proof}

\begin{defs}\label{SHnis}{\rm
Let $((\cc O,\diamond,pt),\sigma,\epsilon)$ be a symmetric monoidal $V$-spectral category. Since it is both Nisnevich
and motivically excisive, it follows from~\cite[5.13]{GP} that the
pair of natural adjoint fuctors
   $$\xymatrix{{\Psi_*}:Pre^\Sigma(Sm/k)\ar@<0.5ex>[r]&\Mod\cc O:{\Psi^*}\ar@<0.5ex>[l]}$$
induces a Quillen pair for the Nisnevich local projective
(respectively motivic) model structures on $Pre^\Sigma(Sm/k)$ and
$\Mod\cc O$. In particular, one has adjoint functors between
triangulated categories
   \begin{equation}\label{adjoint}
    {\Psi_*}: \text{SH}^{nis}(\cc O_{naive})\rightleftarrows \text{SH}^{nis}(\cc O):{\Psi^*}\quad\textrm{ and }
    \quad {\Psi_*}:\text{SH}^{mot}(\cc O_{naive})\rightleftarrows \text{SH}^{mot}(\cc O):{\Psi^*}.
   \end{equation}
}   
\end{defs}   

\section{The triangulated category $D\cc O_-^{eff}(k)$}\label{dominus}

In this section we work with a symmetric monoidal $V$-spectral category 
$((\cc O,\diamond,pt),\sigma,\epsilon)$ in the sense of Definition \ref{vsp}.
We work in this section with the category $\text{SH}^{nis}(\cc O)$ as in Definition \ref{SHnis}.

Let $M$ be an $\cc O$-module. By Remark \ref{additivity} its $\pi_0\cc O$-presheaves $\pi_i(M)$
restricted via the $\epsilon$ to the additive category $\bb ZF_*(k)$
are $\bb ZF_*(k)$-{\it presheaves of Abelian groups} in the sense of \cite[Def. 2.13]{GP3}. 
Thus, by \cite[Lemma 4.5]{Voe2} and \cite[Cor. 2.17]{GP3} the associated Nisnevich sheaf
$\pi^{nis}_i(M)$ is canonically a $\bb ZF_*(k)$-presheaves of Abelian groups 
(possibly it is not a $\pi_0\cc O$-presheaf).

We shall often work with simplicial $\cc O$-modules
$M[\bullet]$. The {\it realization\/} of $M[\bullet]$ is the $\cc
O$-module $|M|$ defined as the coend
   $$|M|=\Delta[\bullet]_+\wedge_{\Delta} M[\bullet]$$
of the functor $\Delta[\bullet]_+\wedge
M[\bullet]:\Delta\times\Delta^{\op}\to\Mod\cc O$. Here $\Delta[n]$
is the standard simplicial $n$-simplex.

Recall that the simplicial ring $k[\Delta]$ is defined as
   $$k[\Delta]_n=k[x_0,\ldots,x_n]/(x_0+\cdots+x_n-1).$$
By $\Delta^{\cdot}$ we denote the cosimplicial affine scheme
$\spec(k[\Delta])$. 
Given an $\cc O$-module $M$, we set
   $$C_*(M):=|\underline{\Hom}(\Delta^{\cdot},M)|.$$
Note that $C_*(M)$ is an $\cc O$-module and is functorial in $M$. 
{\bf Our} $C_*(M)$ {\bf is different of} $C_*(M)$ {\bf used in} \cite[Sect. 3]{GP2}.

\begin{defs}[Definition 3.3 in \cite{GP2}]
\label{boundedOmod}{\rm
The $\cc O$-motive $M_{\cc O}(X)$ of a smooth algebraic variety
$X\in Sm/k$ is the $\cc O$-module $C_*(\cc O(-,X))$. We say that an
$\cc O$-module $M$ is {\it bounded below\/} if for $i\ll 0$ the
Nisnevich sheaf $\pi_i^{\nis}(M)$ is zero. $M$ is {\it
$n$-connected\/} if $\pi_i^{\nis}(M)$ are trivial for $i\leq n$. $M$
is {\it connective\/} is it is $(-1)$-connected, i.e.,
$\pi_i^{\nis}(M)$ vanish in negative dimensions.
}
\end{defs}

\begin{defs}[\cite{GP2}]\label{DOminus}{\rm
Denote by $\Mod_{-}\cc O$ the full subcategory of
of bounded below $\cc O$-modules. \\
Denote by $D\cc O_-(k)$ the full triangulated subcategory of
$SH^{nis}(\cc O)$ of bounded below $\cc O$-modules. We also denote by
$D\cc O_-^{eff}(k)$ the full triangulated subcategory of $D\cc
O_-(k)$ of those $\cc O$-modules $M$ such that each
$\pi_0\cc O$-presheaf
$\pi_i(M)$ regarded via the functor $\epsilon$ as a $\bb ZF_*(k)$-presheaf of Abelian groups
is {\it homotopy invariant and stable} in the sense of
\cite[Def. 2.13, 2.14]{GP3}. 

The category $D\cc O_-^{eff}(k)$ is an
analog of Voevodsky's triangulated category
$DM_-^{eff}(k)$.
}
\end{defs}

\begin{lem}[Corollary 3.4 in \cite{GP2}]
\label{porto}{\rm
If an $\cc O$-module $M$ is bounded below (respectively
$n$-connected) then so is $C_*(M)$. In particular, the
$\cc O$-motive $M_{\cc O}(X)$ of any smooth algebraic variety $X\in Sm/k$
is connective.
}
\end{lem}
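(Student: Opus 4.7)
The plan is to apply the standard spectral sequence for the homotopy sheaves of the realization of a simplicial $\cc O$-module. Write $M[p] := \underline{\Hom}(\Delta^p, M) = M(- \times \Delta^p)$, so that $C_*(M) = |M[\bullet]|$ by the very definition in the excerpt. The skeletal filtration of the realization then yields a convergent spectral sequence of Nisnevich sheaves of abelian groups
$$E^1_{p,q} = \pi_q^{\nis}(M[p]) \Longrightarrow \pi_{p+q}^{\nis}(C_*(M)), \qquad p \geq 0,$$
which converges unconditionally once the input $\pi_q^{\nis}(M[p])$ is uniformly bounded below in $q$.

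Next I would verify that the bounded-below (respectively $n$-connected) hypothesis on $M$ is inherited by every term $M[p]$. Supposing $\pi_i^{\nis}(M) = 0$ for $i < N$, the claim is that $\pi_i^{\nis}(M[p]) = 0$ for every $p$ and every $i < N$. This cannot be seen purely at the presheaf level; one checks it stalkwise, using that for smooth $X$ the scheme $X \times \Delta^p$ is again smooth and that the Nisnevich stalk of the presheaf $X \mapsto \pi_i(M)(X \times \Delta^p)$ at a point is controlled by values of $\pi_i(M)$ on henselizations of smooth local schemes over $X \times \Delta^p$, at which the hypothesis $\pi_i(M)^{\nis} = 0$ forces vanishing. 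Plugging this into the spectral sequence, the $E^1$-page is supported in the region $p \geq 0$ and $q \geq N$, so the abutment vanishes in all total degrees $< N$. This proves the first statement, and the same argument with $N$ replaced by $n+1$ handles the $n$-connected case.

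For the final sentence about the $\cc O$-motive $M_{\cc O}(X)$, observe that the spectral category $\cc O$ is connective by item (1) of Definition \ref{vsp}, so the representable $\cc O$-module $\cc O(-, X)$ takes values in connective symmetric spectra. In particular $\pi_i^{\nis}(\cc O(-, X)) = 0$ for all $i < 0$, and applying the first part with $N = 0$ gives the connectivity of $M_{\cc O}(X) = C_*(\cc O(-, X))$.

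The step I expect to be most delicate is the preservation of Nisnevich-sheafified vanishing under the functor $M \mapsto M(- \times \Delta^p)$. A careless reading would suggest the result is immediate, but the property ``$\pi_i^{\nis} = 0$'' is a statement about the sheafification rather than the presheaf, and one must be careful that the product with $\Delta^p$ behaves well with respect to Nisnevich henselizations. This is where one uses smoothness of $\Delta^p$ over $\spec k$ and the compatibility of henselization with smooth base change; once this is in hand, the spectral sequence finishes the job.
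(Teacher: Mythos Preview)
The paper does not give its own argument here; the lemma is simply cited from \cite[Corollary~3.4]{GP2}. Your spectral-sequence strategy is the natural one and is presumably what underlies that reference, but the step you single out as ``most delicate'' is a genuine gap as written. The Nisnevich stalk at $x\in X$ of the presheaf $X\mapsto\pi_q(M)(X\times\Delta^p)$ is (under the routine continuity assumption) the single presheaf value $\pi_q(M)(X_x^h\times\Delta^p)$. The hypothesis $\pi_q^{\nis}(M)=0$ forces $\pi_q(M)$ to vanish on henselian local schemes, but $X_x^h\times\Delta^p$ is \emph{not} henselian local once $p\geq 1$. Your appeal to ``henselizations of smooth local schemes over $X\times\Delta^p$'' would at best show that $\pi_q(M)$ vanishes Nisnevich-locally on $X_x^h\times\Delta^p$, which says nothing about the presheaf value there; you are conflating the presheaf with its sheafification. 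For a general presheaf $F$ with $F_{\nis}=0$ the precomposed presheaf $F(-\times Y)$ need not have vanishing sheafification---take $F=\mathrm{Pic}$ and $Y=\bb P^1$: then $F_{\nis}=0$, yet the stalk of $F(-\times\bb P^1)$ at $\spec k$ is $\mathrm{Pic}(\bb P^1)=\bb Z$---so some genuine input beyond ``smooth base change'' is required even for $Y=\Delta^p$.

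A clean repair is to first pass to the objectwise $(N{-}1)$-connected cover of $M$, i.e.\ the $\cc O$-module $M'$ with $M'(X)$ the $(N{-}1)$-connected cover of the spectrum $M(X)$; this is again an $\cc O$-module because $\cc O$ is connective. The cofibre of $M'\to M$ has presheaf homotopy concentrated in degrees $<N$, hence trivial $\pi_*^{\nis}$ by hypothesis, so $M'\to M$ is a Nisnevich-local equivalence. For $M'$ the \emph{presheaf}-level spectral sequence $E^1_{p,q}(X)=\pi_q(M'(X\times\Delta^p))$ already vanishes for $q<N$ at every $X$, and the conclusion is immediate. Note, however, that transporting this back to $M$ uses that $C_*$ respects Nisnevich-local equivalences, i.e.\ Lemma~\ref{spain}, which the paper likewise defers to \cite{GP2}; you should verify that its proof does not in turn rest on the present lemma.
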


\begin{rem}\label{1st_endo_funct}{\rm
By Lemma \ref{porto} the assignment $M\mapsto C_*(M)$ is a functor $C_*: \Mod_{-}\cc O\to \Mod_{-}\cc O$.
}
\end{rem}

\begin{lem}[Compare with Lemma 3.5 in \cite{GP2}]
\label{spain}{\rm
The functor $C_*: \Mod_{-}\cc O\to \Mod_{-}\cc O$ respects local equivalences and 
induces a triangulated endofunctor
   $$C_*: D\cc O_-(k)\to D\cc O_-(k)$$
}   
\end{lem}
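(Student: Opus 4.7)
The plan is to factor $C_*$ as $M \mapsto \underline{\Hom}(\Delta^{\cdot}, M) \mapsto |\underline{\Hom}(\Delta^{\cdot}, M)|$: first apply the cosimplicial functor $d \mapsto \underline{\Hom}(\Delta^d, -)$ to obtain a simplicial $\cc O$-module, then take its realization. I will verify separately that each of these two operations preserves Nisnevich-local equivalences. Combined with Lemma \ref{porto}, which ensures that the bounded below condition is preserved, this yields a well-defined endofunctor of $D\cc O_-(k)$, and the triangulated structure will then follow formally.

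For the first operation, I would show that $\underline{\Hom}(U,-)$ preserves local equivalences for each fixed smooth $U$. The symmetric monoidal structure on $\cc O$ is essential already at this point: it supplies the canonical map $\cc O(X',X) \to \cc O(X'\times U, X\times U)$ through which $\underline{\Hom}(U,M) = M(-\times U)$ becomes an $\cc O$-module, and makes $\underline{\Hom}(U,-)$ the right adjoint of a left Quillen endofunctor of $\Mod\cc O$ with its Nisnevich local model structure. Concretely, since the restriction $X\mapsto X\times U$ sends elementary distinguished squares to elementary distinguished squares (cf.\ Definition \ref{Nis_and_Mot_exc}), and $\cc O$ is Nisnevich excisive (Definition \ref{vsp}), $\pi_i^{nis}$-isomorphisms are preserved after restriction.

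For the realization step, the cosimplicial scheme $\Delta^{\cdot}$ has the standard split codegeneracies, so the simplicial $\cc O$-module $\underline{\Hom}(\Delta^{\cdot}, M)$ is Reedy cofibrant; hence its realization $|-|$ computes the homotopy colimit over $\Delta^{\op}$ and preserves levelwise local equivalences. Combining the two steps, $C_*$ sends local equivalences in $\Mod_-\cc O$ to local equivalences, so it descends to a functor $D\cc O_-(k)\to D\cc O_-(k)$. This induced functor is triangulated because the realization commutes with the suspension $[1]$ up to canonical natural isomorphism and preserves cofiber sequences, being itself a colimit construction, so distinguished triangles are sent to distinguished triangles.

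The main obstacle I anticipate is Step 1: one must confirm that the action of $-\times U$ on $Sm/k$ lifts through the spectral enrichment $\cc O$ to an endofunctor of $\Mod\cc O$ which respects Nisnevich excision, preserving the generators $\cc N_{\cc O}$ of local acyclic cofibrations. This is where the symmetric monoidal hypothesis on $\cc O$ enters essentially rather than cosmetically: without it, there is no a priori $\cc O$-linear lift of $-\times U$ from $Sm/k$ to $\Mod\cc O$, and the whole argument collapses. The analogous verification was carried out in \cite[Lemma 3.5]{GP2} in a non-monoidal variant, and the present symmetric monoidal setting simplifies rather than complicates the lift.
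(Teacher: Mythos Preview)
The paper gives no proof of this lemma at all: it is stated with the parenthetical ``Compare with Lemma~3.5 in \cite{GP2}'' and nothing more, so the intended argument is simply the one in that reference, adapted to the symmetric monoidal setting. Your two-step factorization through $\underline{\Hom}(U,-)$ followed by realization, together with the appeal to Lemma~\ref{porto} for the bounded-below condition, is exactly the shape of argument one expects from \cite{GP2}, and your emphasis that the symmetric monoidal structure on $\cc O$ is what promotes $M(-\times U)$ to an $\cc O$-module is the correct point of contact with the present paper's hypotheses.

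One small technical correction: your justification that $\underline{\Hom}(\Delta^{\cdot},M)$ is Reedy cofibrant ``because $\Delta^{\cdot}$ has split codegeneracies'' is not quite right as stated. Split degeneracies in the resulting simplicial object do not by themselves force the latching maps to be cofibrations in the projective model structure on $\Mod\cc O$. What you actually need, and what does hold, is that in this stable setting the realization $|{-}|$ models the homotopy colimit over $\Delta^{\op}$ and hence takes levelwise local equivalences to local equivalences; this can be argued via the skeletal filtration and left properness, or via the associated spectral sequence, without invoking Reedy cofibrancy. This is a routine repair and does not affect the soundness of your overall plan.
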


\begin{thm}[Compare with Theorem 3.5 in \cite{GP2}]
\label{neploho}{\rm
Let $(\cc O,\diamond, pt)$ be a symmetric monoidal $V$-spectral category. 
Consider the full triangulated subcategory $\cc T$ of $SH^{nis}(\cc O)$ generated by the compact objects
$\cone(\cc O(-,X\times\bb A^1)\to\cc O(-,X)),\ X\in Sm/k.$
Then the triangulated endofunctor
   $$C_*:D\cc O_-(k)\to D\cc O_-(k)$$
as in Lemma \ref{spain} lands in $D\cc O_-^{eff}(k)$. The kernel of $C_*$ is $\cc T_-:=\cc
T\cap D\cc O_-(k)$. Moreover, $C_*$ is left adjoint to the inclusion
functor
   $$i:D\cc O_-^{eff}(k)\to D\cc O_-(k)$$
and $D\cc O_-^{eff}(k)$ is triangle equivalent to the quotient
category $D\cc O_-(k)/\cc T_-$ \ .
}
\end{thm}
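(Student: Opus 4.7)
The plan is to follow the blueprint of \cite[Theorem 3.5]{GP2}, adapting each step to the present symmetric monoidal $V$-spectral setting and to the cosimplicial definition of $C_*$ used here. The four conclusions are naturally layered: (1) lets us regard $C_*$ as landing in $D\cc O_-^{eff}(k)$; (2) identifies its kernel; (3) promotes $C_*$ to a left adjoint of the inclusion; and (4) is then a formal consequence by Verdier-quotient formalism.

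For (1), let $M$ be a bounded-below $\cc O$-module. I would argue that each Nisnevich sheaf $\pi_i^{nis}(C_*(M))$, regarded as a $\bb ZF_*(k)$-presheaf via $\epsilon$, is homotopy invariant and stable in the sense of \cite[Def.~2.13, 2.14]{GP3}. Homotopy invariance is the classical Suslin-type property of $\underline{\Hom}(\Delta^{\cdot}, M)$: the projection $\Delta^{\cdot} \times \bb A^1 \to \Delta^{\cdot}$ provides an explicit cosimplicial chain homotopy that becomes a true $\bb A^1$-invariance after realization. Stability is the delicate half; I would exhibit an $\bb A^1$-homotopy between the framed suspension $\sigma$ and $\id$ at the level of framed correspondences, then transport it through the $\Delta^{\cdot}$-realization, following the pattern of \cite[Lemma~2.15]{GP3} together with \cite[Lemma~4.5]{Voe2}. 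Combined with Lemma~\ref{porto}, which keeps $C_*(M)$ bounded below, this places $C_*(M)$ in $D\cc O_-^{eff}(k)$.

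For (2) and (3), the central object is the natural unit morphism $\alpha_M : M \to C_*(M)$. First, $C_*$ sends each generator $\cone(\cc O(-, X\times \bb A^1) \to \cc O(-, X))$ of $\cc T$ to zero in $SH^{nis}(\cc O)$, because applying $\underline{\Hom}(\Delta^{\cdot}, -)$ makes the structural projection and the zero-section mutually inverse in the homotopy category via an explicit one-step $\bb A^1$-homotopy. Since $C_*$ is triangulated by Lemma~\ref{spain}, this gives $\cc T_- \subseteq \krr(C_*)$. Conversely, I would show that the fiber of $\alpha_M$ lies in $\cc T_-$: it is assembled from the very $\bb A^1$-contractions that build the cosimplicial resolution, which are levelwise extensions of the listed generators of $\cc T$, while the bounded-below hypothesis keeps the total fiber inside $D\cc O_-(k)$. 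If $C_*(M) = 0$, then $\alpha_M$ realizes $M$ as a shift of this fiber, hence $M \in \cc T_-$, which proves (2). For (3), if $N \in D\cc O_-^{eff}(k)$, then the homotopy invariance and stability of the $\bb ZF_*(k)$-presheaves $\pi_i(N)$, combined with \cite[Thm.~1.1]{GP3}, force $\alpha_N$ to be a Nisnevich-local equivalence; together with $\cc T_- \subseteq \krr(C_*)$, the standard orthogonality argument then supplies the adjunction $\Hom(C_*(M), N) \cong \Hom(M, iN)$.

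Part (4) is then formal: $C_*$ is a Bousfield localization onto the right-orthogonal subcategory $D\cc O_-^{eff}(k)$ with kernel $\cc T_-$, and such a localization always realizes the image as the Verdier quotient $D\cc O_-(k)/\cc T_-$. The main obstacle, as I see it, is the stability half of step (1): transferring the framed-stability machinery of \cite{GP3, Voe2} through the cosimplicial realization in the symmetric monoidal spectral setting requires care, because the symmetric monoidal structure $\diamond$ on $\cc O$ must interact compatibly with the $\bb ZF_*(k)$-action coming through $\epsilon$, and this compatibility is precisely what is new compared to the non-monoidal framework of \cite{GP2}. A closely related subtlety is verifying that the fiber of $\alpha_M$ really lives in the particular triangulated subcategory $\cc T$ generated by the listed compact cones, rather than merely in some larger $\bb A^1$-local subcategory; this reduces to a careful analysis of the skeletal filtration of $\underline{\Hom}(\Delta^{\cdot}, M)$.
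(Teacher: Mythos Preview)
The paper does not supply a proof of this theorem: it is stated with the tag ``Compare with Theorem 3.5 in \cite{GP2}'' and no argument follows, so the paper's own proof is simply the implicit claim that the argument of \cite[Theorem~3.5]{GP2} carries over to the symmetric monoidal $V$-spectral setting. Your proposal is exactly this---to follow the blueprint of \cite[Theorem~3.5]{GP2} and adapt each step---so your approach coincides with the paper's, and in fact you are more explicit than the paper about where the adaptations are needed (the stability half of step~(1) and the skeletal analysis of the fiber of $\alpha_M$).
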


\section{The main symmetric monoidal strict $V$-spectral category}\label{The_Category}
We construct in this section our
main symmetric monoidal strict $V$-spectral category $(\bb {F}\text{r},\diamond, pt)$.

First construct a spectral category $\bb {F}\text{r}$.
Its objects are those of $Sm/k$.
To each pair $Y,X\in Sm/k$ we assign a symmetric spectrum
$\bb {F}\text{r}(Y,X)$. The latter is described as follows. Its terms are the functors
$A \mapsto \bb {F}\text{r}(Y,X)_A=Fr_A(Y,X\otimes S^A)$ (here $A$ runs over the category of finite sets
and their isomorphisms). The structure maps are defined by the obvious compositions
$$\epsilon_{A,B}: Fr_A(Y,X\otimes S^A)\wedge S^B\to Fr_{A}(Y,X\otimes S^{A\sqcup B})\hookrightarrow Fr_{A\sqcup B}(Y,X\otimes S^{A\sqcup B}).$$
For each triple $Z,Y,X\in Sm/k$ there is an obvious symmetric spectra morphism
$$\circ_{Z,Y,X}: \bb {F}\text{r}(Y,X)\wedge \bb {F}\text{r}(Z,Y)\to \bb {F}\text{r}(Z,X) \ \ \text{(the composition law)}.$$
It is uniquely determined by simplicial set morphisms
$\bb {F}\text{r}(Y,X)_A\wedge \bb {F}\text{r}(Z,Y)_B\to \bb {F}\text{r}(Z,X)_{A\sqcup B}$
which on $n$-simplices are given by the set maps
$$Fr_A(Y,X\otimes (S^A)_n)\wedge Fr_B(Z,Y\otimes (S^B)_n)\to Fr_A(Y\otimes (S^B)_n,X\otimes (S^A)_n\otimes (S^B)_n)\wedge Fr_B(Z,Y\otimes (S^B)_n)\to$$
$$\to Fr_{A\sqcup B}(Z,X\otimes (S^{A\sqcup B})_n).$$
In details, the set map is given by
$$(\alpha,\beta)\mapsto (\alpha\otimes id_{(S^B)_n},\beta)\mapsto (\alpha\otimes id_{(S^B)_n})\circ \beta.$$
For each $X\in Sm/k$ the identity morphism $id_X$ gives rise to the symmetric spectra morphism
$u_X: \bb S\to \bb {F}\text{r}(X,X)$.
We formed a spectral category $\bb {F}\text{r}$ and
a spectral functor
$\sigma: \cc O_{naive}\to \bb {F}\text{r}$,
which is identity on objects. The pair
$(\bb {F}\text{r},\sigma)$
is a spectral category over $Sm/k$
in the sense of Definition \ref{basic}(6).

Equip now the spectral category $\bb {F}\text{r}$ with a spectral functor
$\diamond: \bb {F}\text{r}\wedge \bb {F}\text{r}\to \bb {F}\text{r}$
(taking $(X_1,X_2)$ to $X_1\times X_2$),
a unit $u\in \bb {F}\text{r}$,
a $Sp^{\Sigma}$-natural associativity isomorphism $a$
and two $Sp^{\Sigma}$-natural unit isomorphisms $u_l$, $u_r$
and a twist isomorphism
$tw: \bb {F}\text{r}\wedge \bb {F}\text{r}\to \bb {F}\text{r}\wedge \bb {F}\text{r}$
and a spectral functor isomorphism
$\Phi: \diamond\to \diamond\circ tw$
such that the data
$$(\bb {F}\text{r}, \diamond, tw, \Phi, u, a, u_l, u_r)$$
form a symmetric monoidal spectral category.

First construct the spectral functor
$\diamond$. On objects it takes an object $(X_1,X_2)\in Sm/k\times Sm/k$ to $X_1\times X_2 \in Sm/k$.
To construct $\diamond$ on morphisms it sufficient to construct certain symmetric spectra morphisms
$$\diamond_{(V,Y),(U,X)}: \bb {F}\text{r}(V,U)\wedge \bb {F}\text{r}(Y,X)\xrightarrow{} \bb {F}\text{r}(V\times Y,U\times X)$$
and check that they satisfy the expected properties. To construct the morphism $\diamond_{(V,Y),(U,X)}$ it is sufficient to construct
simplicial set morphisms
$$\boxtimes_{(V,Y),(U,X),A,B}: \bb {F}\text{r}(V,U)_A\wedge \bb {F}\text{r}(Y,X)_B \xrightarrow{} \bb {F}\text{r}(V\times Y,U\times X)_{A\sqcup B}$$
subjecting the known properties.
The latter are given on $n$-simplices by the exterior product maps
$$\boxtimes_{(V,Y),(U,X),A,B,\ n}: Fr_A(V,U\otimes (S^A)_n)\wedge Fr_B(Y,X\otimes (S^B)_n) \to Fr_{A\sqcup B}(V\times Y,(U\times X) \otimes (S^{A\sqcup B})_n).$$
We constructed the spectral functor
$\diamond$.

Second we take the point $pt:=Spec (k)$ as the unit of the spectral category $\bb {F}\text{r}$
and we skip constructions of desired $a$, $u_l$, $u_r$ (they are obvious).

Third we construct the twist spectral categories isomorphism
$tw: \bb {F}\text{r}\wedge \bb {F}\text{r}\to \bb {F}\text{r}\wedge \bb {F}\text{r}$.
On objects it takes $(X_1,X_2)$ to $(X_2,X_1)$. On morphisms it is determined by certain symmetric spectra isomorphisms
$$tw_{(V,Y),(U,X)}: \bb {F}\text{r}(V,U)\wedge \bb {F}\text{r}(Y,X)\xrightarrow{} \bb {F}\text{r}(Y,X)\wedge \bb {F}\text{r}(V,U).$$
In turn the $tw_{(V,Y),(U,X)}$ is determined by the family of simplicial set isomorphisms (switching factors)
$$tw^C_{A,B}: \bb {F}\text{r}(V,U)_A\wedge \bb {F}\text{r}(Y,X)_B \to \bb {F}\text{r}(Y,X)_B\wedge \bb {F}\text{r}(V,U)_A.$$
Here for each finite set $C$ the ordered pairs $(A,B)$ run over all subsets $A\subseteq C$, $B\subseteq C$ such that
$A\cup B=C$ and $A\cap B=\emptyset$.

Finally we construct the desired spectral functor isomorphism
$\Phi: \diamond\to \diamond\circ tw$. It is the assignment
$(V,Y)\mapsto \Phi(V,Y)=[\tau_{V,Y}: V\times Y \to Y\times X]$. Here the switching factors
isomorphism $\tau_{V,Y}$ is regarded as a point in $Fr_0(V\times Y, Y\times V)$.
So, it is regarded as a symmetric spectra morphism
$\bb S\xrightarrow{\Phi(V,Y)} \bb {F}\text{r}(V\times Y, Y\times V)$.
It's easy to check that $\Phi$ is a spectral functor isomorphism indeed.

We left to the reader to check that the data
$(\bb {F}\text{r}, \diamond, tw, \Phi, u, a, u_l, u_r)$
form a symmetric monoidal spectral category.

\section{Properties of the main spectral category}
Let $((\bb {F}\text{r},\diamond,pt), \sigma:\cc O_{naive}\to \bb {F}\text{r})$ be the {\it symmetric monoidal} spectral category over $Sm/k$
as in Section \ref{The_Category}. 
\begin{lem}\label{epsilon}{\rm
There is an additive functor $\bb ZF_*(k)\xrightarrow{\epsilon}\pi_0(\bb {F}\text{r})$ such that 
the data $((\cc O, \diamond, pt),\sigma,\epsilon)$
is a symmetric monoidal $V$-spectral category in the sense of Definition \ref{vsp}.
}
\end{lem}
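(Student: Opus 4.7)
My plan is to break the verification into three independent tasks: (a) constructing the additive functor $\epsilon: \bb ZF_*(k)\to \pi_0(\bb {F}\text{r})$ and checking $\rho = \epsilon\circ in$; (b) verifying that $\bb {F}\text{r}$ is connective; (c) verifying that $\bb {F}\text{r}$ is Nisnevich excisive. Items (a) and (b) should be essentially tautological from the construction in Section \ref{The_Category}, while (c) is where the real work lies.

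For (a), I let $\epsilon$ be the identity on objects. A generator $c\in Fr_n(Y,X)$ of $\bb ZF_*(k)(Y,X)$ is a vertex of the simplicial set $\bb {F}\text{r}(Y,X)_n = Fr_n(Y,X\otimes S^n)$, hence represents a class in $\hat\pi_0\bb {F}\text{r}(Y,X)$ and, after part (b) grants semistability in degree zero, a class in $\pi_0\bb {F}\text{r}(Y,X)$. The structure maps $\epsilon_{A,B}$ of $\bb {F}\text{r}(Y,X)$ realize exactly the suspension relation used to form stable framed correspondences in \cite[Def.~2.13]{GP3}, so the assignment descends to $\bb ZF_*(k)$. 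Additivity is inherited from the abelian-group structure on $\pi_0$ of a connective symmetric spectrum, and compatibility with composition follows by unwinding the definition of $\circ_{Z,Y,X}$ on vertices. The equality $\rho=\epsilon\circ in$ is automatic since both sides send a morphism $f:Y\to X$ to the graph of $f$, viewed as an element of $Fr_0(Y,X)=\bb {F}\text{r}(Y,X)_\emptyset$.

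For (b), each term $\bb {F}\text{r}(Y,X)_A = Fr_A(Y,X\otimes S^A)$ is a pointed simplicial set, and the structure maps coincide with the natural inclusions $Fr_A\hookrightarrow Fr_{A\sqcup B}$ after smashing with $S^B$. Such a spectrum is naively connective, and \cite[Thm.~1.1]{GP3} guarantees that the Nisnevich-sheafified framed presheaves $\pi_i^{nis}$ are strictly homotopy invariant and stable, so in particular $\bb {F}\text{r}(Y,X)$ is semistable and its true homotopy groups agree with the naive ones and vanish in negative degrees.

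The main obstacle is (c). Given an elementary distinguished square $Q$ as in \eqref{squareQ} and a smooth scheme $Y$, I must show that the square of symmetric spectra $\bb {F}\text{r}(Y,-)$ evaluated on $Q$ is a Nisnevich-local homotopy pushout in $Pre^\Sigma(Sm/k)$. My plan is to reduce to a levelwise assertion: in the connective range, homotopy pushout squares of symmetric spectra of presheaves are detected on the underlying pointed simplicial presheaves (since a levelwise Nisnevich-local pushout of pointed simplicial presheaves induces a Nisnevich-local stable pushout of the assembled symmetric spectra). It thus suffices to show that for each finite set $A$ the presheaf $Fr_A(-,?\otimes S^A)$ sends $Q$ to a Nisnevich-locally homotopy cocartesian square, which is a levelwise Nisnevich descent property for framed correspondences; this should follow by combining \cite[Lemma~4.5]{Voe2} with the sheafification arguments of \cite[Lemma~2.15, Cor.~2.17]{GP3}. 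The delicate point I expect to be the principal difficulty is gluing the levelwise excision statements into a genuine Nisnevich-local homotopy pushout of symmetric spectra rather than merely a level-weak-equivalence in the projective model structure. Once (a), (b) and (c) are established, the four corners of Definition \ref{vsp} are filled in and the lemma follows.
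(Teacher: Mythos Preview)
The paper gives no proof of this lemma at all: it is stated and then immediately followed by ``Applying now Lemma~\ref{pepe} we get the following\ldots'', so there is nothing in the paper to compare your argument against.

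That said, your decomposition (a)--(c) is the natural one, and (a) is fine. Two points deserve attention.

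In (b) the appeal to \cite[Thm.~1.1]{GP3} is a non sequitur. That theorem says that a homotopy invariant stable framed presheaf has strictly $\bb A^1$-invariant Nisnevich sheafification; it has nothing to do with semistability of the symmetric spectrum $\bb{F}\text{r}(Y,X)$, which is what you need to pass from naive connectivity to true connectivity. Nor is naive connectivity itself as automatic as you suggest: the $n$-th level $Fr_n(Y,X\otimes S^n)$ is a simplicial set, but that alone does not force $\hat\pi_k=0$ for $k<0$. A correct argument would either identify $\bb{F}\text{r}(Y,X)$ with one of the framed spectra analysed in \cite{GP4} and quote the connectivity/semistability results proved there, or show directly that the levels are sufficiently connected to make the spectrum convergent in the sense of \cite{Sch}.

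In (c) the lemmas you cite, \cite[Lemma~4.5]{Voe2} and \cite[Lemma~2.15, Cor.~2.17]{GP3}, are about Nisnevich sheafification of framed presheaves in the \emph{source} variable. Nisnevich excision in Definition~\ref{Nis_and_Mot_exc} asks that the square $\bb{F}\text{r}(-,Q)$, indexed by the \emph{target} square $Q$, be a homotopy pushout in the Nisnevich-local model structure; the cited statements do not speak to this. The relevant input is rather the covariant behaviour of $Fr_n$ under elementary Nisnevich squares in the target, as developed in \cite{GP4} and \cite{GP5}. Your instinct that this is where the real content lies is correct, but the references need to be replaced.
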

Applying now Lemma \ref{pepe} we get the following 
\begin{cor}\label{Nis_and_Mot_exc_true}{\rm
The symmetric monoidal spectral category $(\bb {F}\text{r}, \diamond, pt, tw, \Phi, u, a, u_l, u_r)$
as in Section \ref{The_Category} is Nisnevich and Motivically excisive in the sense of
\cite{GP}
(see Definition \ref{Nis_and_Mot_exc}).
}
\end{cor}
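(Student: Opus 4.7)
The corollary is essentially a direct combination of the two preceding results, so the plan is short and largely a matter of bookkeeping.

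First I would invoke Lemma \ref{epsilon} to produce the additive functor $\epsilon\colon \bb ZF_*(k)\to \pi_0(\bb {F}\text{r})$ making the triple $((\bb {F}\text{r},\diamond,pt),\sigma,\epsilon)$ into a symmetric monoidal $V$-spectral category in the sense of Definition \ref{vsp}. Unpacking that definition, the $V$-spectral property already bundles two of the conclusions for free: $\bb {F}\text{r}$ is \emph{connective} (the mapping spectra $\bb {F}\text{r}(Y,X)$ are built from the functors $A\mapsto Fr_A(Y,X\otimes S^A)$, so in particular have no negative homotopy) and $\bb {F}\text{r}$ is \emph{Nisnevich excisive}. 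This last point gives the first half of the corollary immediately, with no further work required.

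For the motivic excisiveness half, I would simply feed the $V$-spectral category produced above into Lemma \ref{pepe}, which asserts that every symmetric monoidal $V$-spectral category is motivically excisive. Concretely, this delivers both conditions (A) and (B) of Definition \ref{Nis_and_Mot_exc}(II): squares $\bb {F}\text{r}Q$ arising from elementary Nisnevich squares $Q$ are homotopy pushout in the motivic model structure on $Pre^\Sigma(Sm/k)$, and for each $X\in Sm/k$ the map $\bb {F}\text{r}(-,X\times \bb A^1)\to \bb {F}\text{r}(-,X)$ is a motivic weak equivalence.

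The only genuine content therefore lies not in the corollary itself but one step upstream in Lemma \ref{epsilon}: one must exhibit a well-defined additive functor $\epsilon$ on $\bb ZF_*(k)$ landing in $\pi_0(\bb {F}\text{r})$ and compatible with the structure map $\rho=\epsilon\circ in$. That is the step I would expect to require care, since it amounts to checking that framed correspondences of level zero (and their linearisations) assemble correctly into the $\pi_0$-layer of the spectral mapping objects. But for the corollary as stated, once Lemma \ref{epsilon} and Lemma \ref{pepe} are in hand, the proof is a one-line chain of implications.
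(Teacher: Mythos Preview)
Your proposal is correct and matches the paper's own argument exactly: the paper simply writes ``Applying now Lemma~\ref{pepe} we get the following'' before stating the corollary, so the intended proof is precisely the chain Lemma~\ref{epsilon} $\Rightarrow$ $V$-spectral category (hence Nisnevich excisive by Definition~\ref{vsp}(1)) $\Rightarrow$ motivically excisive by Lemma~\ref{pepe}. Your additional remarks about where the substantive work lies (namely in Lemma~\ref{epsilon}) are accurate but go beyond what the paper records for this corollary.
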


The following definition is just Definition \ref{boundedOmod} adapted to the category $\Mod \bb {F}\text{r}$.
\begin{defs}\label{boundedFrmod}{\rm
The $\bb {F}\text{r}$-motive $M_{\bb {F}\text{r}}(X)$ of a smooth algebraic variety
$X\in Sm/k$ is the $\bb {F}\text{r}$-module $C_*(\bb {F}\text{r}(-,X))$. We say that an
$\bb {F}\text{r}$-module $M$ is {\it bounded below\/} if for $i\ll 0$ the
Nisnevich sheaf $\pi_i^{\nis}(M)$ is zero. $M$ is {\it
$n$-connected\/} if $\pi_i^{\nis}(M)$ are trivial for $i\leq n$. $M$
is {\it connective\/} is it is $(-1)$-connected, i.e.,
$\pi_i^{\nis}(M)$ vanish in negative dimensions.

}\end{defs}

\begin{defs}{\rm
Denote by $\Mod \bb {F}\text{r}_{-}$ the full subcategory of
of bounded below $\bb {F}\text{r}$-modules. \\
Denote by $\text{D}\bb {F}\text{r}_-(k)$ the full triangulated subcategory of
$SH^{nis}(\bb {F}\text{r})$ of bounded below $\bb {F}\text{r}$-modules. We also denote by
$\text{D}\bb {F}\text{r}_-^{eff}(k)$
the full triangulated subcategory of
$\text{D}\bb {F}\text{r}_-(k)$
of those $\bb {F}\text{r}$-modules $M$ such that each 
$\bb ZF_*(k)$-presheaf
$\pi_i(M)|_{\bb ZF_*(k)}$ is 
{\it homotopy invariant and stable} 
in the sense of 
\cite[Def. 2.13, 2.14]{GP3}. 
}
\end{defs}
In certain sense
$\text{D}\bb {F}\text{r}_-^{eff}(k)$ is an
analog of Voevodsky's triangulated category
$DM_-^{eff}(k)$ 
\cite{Voe1}.

\begin{defs}\label{prekrasno}{\rm
The triangulated category $\text{D}\bb {F}\text{r}_-^{eff}(k)$
is called
{\it the triangulated category of effective $\bb {F}\text{r}$-motives.
}
}
\end{defs}

One can prove the following 
\begin{thm}\label{DFr_eff_and_SH_eff}{\rm
There is a natural triangulated equivalence between the triangulated categories
$\text{D}\bb {F}\text{r}_-^{eff}(k)$
and the Voevodsky category
$SH^{eff}_-(k)$.
}
\end{thm}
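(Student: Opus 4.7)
The plan is to construct a triangulated functor $\bb M_{\text{SH}}:\text{D}\bb{F}\text{r}_-^{eff}(k)\to SH^{eff}_-(k)$, show it is fully faithful on a set of compact generators, and then deduce essential surjectivity using compact generation of both sides. The natural candidate is built from the forgetful functor $\Psi^*:\Mod\bb{F}\text{r}\to Pre^\Sigma(Sm/k)$ of Definition~\ref{SHnis} composed with $\bb G_m$-stabilization: an $\bb{F}\text{r}$-module $M$ is first viewed as a presheaf of symmetric $S^1$-spectra via $\Psi^*$, and $\Sigma^\infty_{\bb G_m}$ of a motivically fibrant replacement then produces an effective motivic bispectrum. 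On generators this should send $M_{\bb{F}\text{r}}(X)=C_*(\bb{F}\text{r}(-,X))$ to $\Sigma^\infty_{\bb G_m}\Sigma^\infty_{S^1}(X_+)$, as announced in the introduction.

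The central input is the framed motives theorem together with its cancellation counterpart: for every $X\in Sm/k$ the $\bb{F}\text{r}$-motive $C_*\bb{F}\text{r}(X)$, regarded as a presheaf of $S^1$-spectra, is Nisnevich-motivically equivalent to the infinite $\bb G_m$-loop space of $\Sigma^\infty_{\bb G_m}\Sigma^\infty_{S^1}(X_+)$. Combined with the results of GP3 recalled in the introduction (a radditive $\bb ZF_*(k)$-presheaf that is homotopy invariant and stable has a strictly $\bb A^1$-invariant Nisnevich sheafification), this matches the effectiveness condition defining $\text{D}\bb{F}\text{r}_-^{eff}(k)$ with the condition that $\Psi^*(M)$ already be motivically local, so that no further $\bb A^1$-fibrant replacement is needed.

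Next I would prove full faithfulness on the compact generators $\{M_{\bb{F}\text{r}}(X)\}_{X\in Sm/k}$. By Theorem~\ref{neploho} the groups $\Hom_{\text{D}\bb{F}\text{r}_-^{eff}(k)}(M_{\bb{F}\text{r}}(X),M_{\bb{F}\text{r}}(Y)[n])$ are computed as Nisnevich hyperhomotopy groups of the framed motive $C_*\bb{F}\text{r}(X\times -,Y)$; while on the motivic side the analogous Hom-groups are the bigraded stable motivic homotopy groups $\pi_{n,0}^{\bb A^1}(\Sigma^\infty_{\bb G_m}\Sigma^\infty_{S^1}(Y_+))(X)$. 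The framed motives theorem plus cancellation deliver exactly a natural isomorphism between these two computations. Essential surjectivity then follows because $SH^{eff}_-(k)$ is compactly generated (in the bounded-below sense) by the $\Sigma^\infty_{\bb G_m}\Sigma^\infty_{S^1}(X_+)$: a fully faithful triangulated functor between bounded-below triangulated categories hitting a generating set of compact objects and commuting with the relevant homotopy colimits is an equivalence onto its essential image.

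The main obstacle is the translation between the $\bb{F}\text{r}$-effectiveness condition and motivic fibrancy in the bispectrum category. Concretely, one must verify that for a bounded-below $\bb{F}\text{r}$-module $M$ the derived $\Psi^*M$ admits a motivically fibrant model whose bigraded homotopy presheaf is recovered from $\pi_i(M)|_{\bb ZF_*(k)}$, so that homotopy invariance and stability on the framed side match $\bb A^1$-invariance and $\bb G_m$-periodicity on the motivic side. This is where the strict homotopy invariance theorem of GP3 and cancellation for framed correspondences enter in their most essential form; a convenient packaging is through the Quillen adjunction $(\Psi_*,\Psi^*)$ of Definition~\ref{SHnis}, localized to the motivic model structures, together with an explicit identification of its derived unit and counit on the generators $M_{\bb{F}\text{r}}(X)$ via the framed motives theorem. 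Everything else — monoidality of $\bb M_{\text{SH}}$ and compatibility with the tensor product — should then follow formally from the symmetric monoidal structure on $\bb{F}\text{r}$ constructed in Section~\ref{The_Category} and the fact that $M_{\bb{F}\text{r}}(X)\otimes_{\bb{F}\text{r}} M_{\bb{F}\text{r}}(Y)=M_{\bb{F}\text{r}}(X\times Y)$.
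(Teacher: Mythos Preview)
Your strategy is reasonable and relies on the correct deep inputs (the framed motives theorem of \cite{GP4}, cancellation \cite{AGP}, and strict homotopy invariance \cite{GP3}), but it differs from the route the paper actually takes, and you should be aware that the paper itself does not give a complete proof: it only sets up the functors in Section~6 and explicitly postpones the argument to a subsequent preprint, pointing to the style of \cite[Section~2]{GP5}.

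The paper's construction is not the one you describe. You propose to forget along $\Psi^*:\Mod\bb{F}\text{r}\to Pre^\Sigma(Sm/k)$ and then $\bb G_m$-stabilize. The paper instead passes through the intermediate category $Sp_{\bb G_m^{\wedge 1}}(\Mod\bb{F}\text{r})$ of $\bb G_m$-spectra \emph{in} $\bb{F}\text{r}$-modules: it sets $\bb M_{\text{SH}}(\cc E)=\Phi^*\bigl(\Sigma^\infty_{\bb{F}\text{r}(1)}(\cc E^c)\bigr)$, where the $n$th level of the $\bb G_m$-spectrum is $\cc E^c\otimes^{\bb{F}\text{r}}\bb{F}\text{r}(n)$ with $\bb{F}\text{r}(n)=M_{\bb{F}\text{r}}(\bb G_m^{\wedge n})$, and only afterwards forgets via $\Phi^*$. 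The quasi-inverse is likewise built explicitly as $\bb M^{\bb{F}\text{r}}=C_*\circ\Omega^\infty_{\bb{F}\text{r}(1)}\circ\Phi_*$. So the paper produces a concrete pair of functors and would check that unit and counit are equivalences, rather than arguing full faithfulness on generators plus essential surjectivity as you do. The payoff of the paper's route is that the $\bb{F}\text{r}$-twist $\cc E\mapsto\cc E\otimes^{\bb{F}\text{r}}\bb{F}\text{r}(1)$ makes direct contact with cancellation for framed correspondences inside $\Mod\bb{F}\text{r}$; your route trades this for a more abstract generator argument.

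One point in your outline deserves care: $\text{D}\bb{F}\text{r}_-^{eff}(k)$ and $SH^{eff}_-(k)$ are categories of \emph{bounded below} objects, so neither has arbitrary coproducts and the usual compact-generation machinery does not apply verbatim. You acknowledge this (``in the bounded-below sense''), but the passage from ``fully faithful on the $M_{\bb{F}\text{r}}(X)$'' to an equivalence needs an honest argument with Postnikov towers or connective covers rather than an appeal to compact generation. This is not fatal, but it is the place where a referee would push back.
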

A sketch of a proof of this result will be presented in the next section.

\section{Triangulated equivalences $SH^{eff}(k)\rightleftarrows \text{D}\bb {F}\text{r}_-^{eff}(k)$ }
We construct in this section triangulated equivalences (quasi-inverse to each the other)
$$\bb M^{\bb {F}\text{r}}_{\text{eff}}: SH^{eff}_-(k)\rightleftarrows \text{D}\bb {F}\text{r}_-^{eff}(k): \bb M_{\text{SH}}^{\text{eff}}.$$
To construct these functors we need preliminaries.
Let $\bb G^{\wedge 1}_m\in \Delta^{op}(Fr_0(k))$ be as in \cite[Notation 8.1]{GP4}. 
Let $\bb G_m^{\wedge n}$ be the $n$th monoidal power $\bb G_m^{\wedge 1}$ be as in \cite[Notation 8.1]{GP4}.
The category $Pre^{\Sigma}_{S^1, \bb G^{\wedge 1}_m}(Sm/k)$ of presheaves of symmetric bispectra
can be regarded as the category of
symmetric $\bb G^{\wedge 1}_m$-spectra in the category
$\Mod\cc O_{naive}$
of presheaves of symmetric spectra
(see Definition \ref{basic}).

Similarly we can (and will) consider a category of symmetric $\bb G^{\wedge 1}_m$-spectra in the category
$\Mod\bb {F}\text{r}$.
It follows from~\cite[5.13]{GP} that there is 
a pair of natural adjoint fuctors
   $$\xymatrix{{\Phi_*}: Pre^{\Sigma}_{S^1, \bb G^{\wedge 1}_m}(Sm/k)=Sp_{\bb G^{\wedge 1}_m}(\Mod\cc O_{naive})\ar@<0.5ex>[r]& Sp_{\bb G^{\wedge 1}_m}(\Mod\bb {F}\text{r}):{\Phi^*}\ar@<0.5ex>[l]}$$
There is another pair of adjoint functors
$$\xymatrix{{\Sigma^{\infty}_{\bb {F}\text{r}(\bb G^{\wedge 1}_m)}}: \Mod\bb {F}\text{r} \ar@<0.5ex>[r]& Sp_{\bb G^{\wedge 1}_m}(\Mod\bb {F}\text{r}): 
\Omega^{\infty}_{\bb {F}\text{r}(\bb G^{\wedge 1}_m)}\ar@<0.5ex>[l]}$$
Here $\bb {F}\text{r}(\bb G^{\wedge 1}_m)$ stands for the $\bb {F}\text{r}$-module represented by the simplicial scheme $\bb G^{\wedge 1}_m$.

For each $\bb {F}\text{r}$-module $M$ consider the $\bb {F}\text{r}$-module
$C_*(M):=|\underline{\Hom}(\Delta^{\cdot},M)|$ as in Section \ref{dominus}. 
By Lemma \ref{spain} and Theorem \ref{neploho}
the endo-functor $C_*: \Mod\bb {F}\text{r}_{-}\to \Mod\bb {F}\text{r}_{-}$ 
induces a trangulated functor
$C_*: \text{D}\bb {F}\text{r}_-(k) \to \text{D}\bb {F}\text{r}_-^{eff}(k)$. 
By Theorem \ref{neploho}
the pair of triangulated functors 
\begin{equation}\label{Sigma_Omega_2}
    C_*: \text{D}\bb {F}\text{r}_-(k) \rightleftarrows \text{D}\bb {F}\text{r}_-^{eff}(k):i
   \end{equation}
is a pair of adjoint triangulated functors (here $i$ is the inclusion functor).\\
Let $\bb {F}\text{r}(n)=\text{M}_{\bb {F}\text{r}}(\bb G^{\wedge n}_m)$ be the $\bb {F}\text{r}$-motive of $\bb G^{\wedge n}_m$. 
For each cofibrant object $E$ in the projective model structure on $\Mod \bb {F}\text{r}$ put
$E(n)=E\otimes^{\bb {F}\text{r}}\bb {F}\text{r}(n)$. It is a cofibrant object 
in the projective model structure on $\Mod \bb {F}\text{r}$. Clearly, 
$\Sigma^{\infty}_{\bb {F}\text{r}(1)}(E):=(E, E(1), E(2), ... )$ is naturally an object of $Sp_{\bb G^{\wedge 1}_m}(\Mod\bb {F}\text{r}$).
\begin{defs}{\rm
Let $E\mapsto E^c$ be the cofibrant replacement in the projective model structure on $Pre^{\Sigma}_{S^1, \bb G^{\wedge 1}_m}(Sm/k)$. Put\\
$\bb M^{\bb {F}\text{r}}(E)=(C_* \circ \Omega^{\infty}_{\bb {F}\text{r}(1)} \circ \Phi_*)(E^c)=
\Omega^{\infty}_{\bb G^{\wedge 1}_m}C_*\bb F\text{r}(E^c) \in \Mod \bb {F}\text{r}$. \\
Let $\cc E\mapsto \cc E^c$ be the cofibrant replacement in the projective model structure on $\Mod \bb {F}\text{r}$. Put\\
$\bb M_{\text{SH}}(\cc E)=\Phi^*(\Sigma^{\infty}_{\bb {F}\text{r}(1)}(\cc E^c))\in Pre^{\Sigma}_{S^1, \bb G^{\wedge 1}_m}(Sm/k)$.
Thus,\\
$\bb M_{\text{SH}}(\cc E)=\text{the object} \ \Sigma^{\infty}_{\bb {F}\text{r}(1)}(\cc E^c) \ \text{of} \ Sp_{\bb G^{\wedge 1}_m}(\Mod\bb {F}\text{r}) \ 
\text{regarded as an object in} \ Pre^{\Sigma}_{S^1, \bb G^{\wedge 1}_m}(Sm/k)$.
}
\end{defs}
A proof of the following result is postponed to the next preprint. 
It can be given in the spirit of the proofs as in \cite[Section 2]{GP5}.
\begin{thm}\label{VeryMain}{\rm
$\bullet$ The functor $\bb M_{\text{SH}}$ induces a triangulated equivalence \\
$\bb M^{eff}_{\text{SH}}: \text{D}\bb {F}\text{r}_-^{eff}(k)\to SH^{eff}_-(k)$ \\
between these triangulated categories; \\
$\bullet$ A triangulated functor $\bb M^{\bb {F}\text{r}}_{eff} : SH^{eff}_-(k)\to \text{D}\bb {F}\text{r}_-^{eff}(k)$ \\ 
quasi-inverse to 
$\bb M^{eff}_{\text{SH}}$
is induced by the functor \\
$\bb M^{\bb {F}\text{r}}: Pre^{\Sigma}_{S^1, \bb G^{\wedge 1}_m}(Sm/k)\to \Mod \bb {F}\text{r}$.
}
\end{thm}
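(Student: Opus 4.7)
\textbf{Proof Proposal for Theorem \ref{VeryMain}.}

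The plan is to verify in three stages: (a) the two functors $\bb M^{\bb{F}\text{r}}$ and $\bb M_{\text{SH}}$ descend to the required homotopy categories; (b) they form a derived adjoint pair; (c) the unit and counit are isomorphisms. Throughout, I intend to reduce to generators $X\in Sm/k$ and appeal to the framed motives theorem of \cite{GP4}, which identifies $C_*\bb{F}\text{r}(X)$ (after Nisnevich-local replacement) with the infinite motivic loop space of $\Sigma^{\infty}_{\bb G_m}\Sigma^{\infty}_{S^1}(X_+)$, together with the cancellation theorem for framed correspondences.

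\emph{Stage 1 (well-definedness).} For $\bb M^{\bb{F}\text{r}} = C_* \circ \Omega^{\infty}_{\bb{F}\text{r}(1)} \circ \Phi_*$ applied to a cofibrant replacement, the functor $\Phi_*$ preserves projective cofibrations and stable weak equivalences by Corollary \ref{Nis_and_Mot_exc_true} and \cite[5.13]{GP}; $\Omega^{\infty}_{\bb{F}\text{r}(1)}$ is the right adjoint of a Quillen pair and preserves bounded-below modules; then Lemma \ref{spain} and Theorem \ref{neploho} ensure that $C_*$ lands in $\text{D}\bb{F}\text{r}_-^{eff}(k)$. So $\bb M^{\bb{F}\text{r}}$ factors as $\bb M^{\bb{F}\text{r}}_{\text{eff}}:SH^{eff}_-(k)\to \text{D}\bb{F}\text{r}_-^{eff}(k)$. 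For $\bb M_{\text{SH}}$, one verifies that $\Sigma^{\infty}_{\bb{F}\text{r}(1)}$ sends motivic equivalences between cofibrant $\bb{F}\text{r}$-modules to level-motivic equivalences of $\bb G^{\wedge 1}_m$-spectra, and then $\Phi^*$ is a right Quillen functor for the stable motivic model structures. The bounded-below hypothesis passes across under $\Phi^*$ because $\epsilon$ is the identity on objects, so homotopy groups are the same up to the sheafification step.

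\emph{Stage 2 (adjunction).} $\bb M_{\text{SH}}$ and $\bb M^{\bb{F}\text{r}}$ are defined as composites of Quillen adjoint pairs, with $C_*\dashv i$ by Theorem \ref{neploho}. Hence on total derived functors we obtain a natural adjunction
\[
\Hom_{\text{D}\bb{F}\text{r}_-^{eff}(k)}\bigl(\bb M^{\bb{F}\text{r}}_{\text{eff}}(E),\cc E\bigr)\;\cong\;\Hom_{SH^{eff}_-(k)}\bigl(E,\bb M^{eff}_{\text{SH}}(\cc E)\bigr).
\]
At this level the functors are merely adjoints; the content is showing both unit and counit are isomorphisms.

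\emph{Stage 3 (the main point).} I would check the unit $\eta\colon \id \Rightarrow \bb M^{\bb{F}\text{r}}_{\text{eff}}\circ \bb M^{eff}_{\text{SH}}$ is an isomorphism on the compact generators $\text{M}_{\bb{F}\text{r}}(X)$, $X\in Sm/k$, and symmetrically for the counit on $\Sigma^{\infty}_{\bb G_m}\Sigma^{\infty}_{S^1}(X_+)$. On such generators:
\begin{itemize}
\item[(i)] $\bb M^{eff}_{\text{SH}}(\text{M}_{\bb{F}\text{r}}(X))$ equals $\Sigma^{\infty}_{\bb{F}\text{r}(1)}(C_*\bb{F}\text{r}(X))$ regarded as a symmetric bispectrum via $\Phi^*$, which I want to identify with $\Sigma^{\infty}_{\bb G_m}\Sigma^{\infty}_{S^1}(X_+)$ in $SH^{eff}_-(k)$. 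This is exactly the content of the framed motives theorem of \cite{GP4}: $C_*\bb{F}\text{r}(X)$ is a Nisnevich-local, $\bb A^1$-invariant, stable $\pi_*$-spectrum, and its $\bb G_m$-suspension bispectrum is equivalent to the ordinary motivic bispectrum of $X_+$.
\item[(ii)] Then applying $\bb M^{\bb{F}\text{r}}_{\text{eff}}$ amounts to computing $C_*\Omega^{\infty}_{\bb{F}\text{r}(1)}\Sigma^{\infty}_{\bb{F}\text{r}(1)}(C_*\bb{F}\text{r}(X))$. The inner composite $\Omega^{\infty}\Sigma^{\infty}$ on a $(-1)$-connected, $\bb G_m$-stable, motivically fibrant object is an isomorphism in $SH^{eff}_-$, by the cancellation theorem for framed correspondences of Ananyevskiy--Garkusha--Panin. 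Since $C_*\bb{F}\text{r}(X)$ already lies in $\text{D}\bb{F}\text{r}_-^{eff}(k)$, the outer $C_*$ is idempotent and we recover $\text{M}_{\bb{F}\text{r}}(X)$.
\end{itemize}
An analogous computation using the same two inputs shows the counit is an isomorphism on $\Sigma^{\infty}_{\bb G_m}\Sigma^{\infty}_{S^1}(X_+)$. Because these objects compactly generate the two effective categories and both functors are triangulated and commute with the relevant coproducts (by construction of $C_*$ and of $\Sigma^{\infty}_{\bb{F}\text{r}(1)}$), the unit and counit are isomorphisms everywhere.

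\emph{Principal obstacle.} The hard part will be Step (ii) above: the identification $C_*\Omega^{\infty}_{\bb{F}\text{r}(1)}\Sigma^{\infty}_{\bb{F}\text{r}(1)}(C_*\bb{F}\text{r}(X))\simeq C_*\bb{F}\text{r}(X)$ on generators relies on a $\bb G_m$-cancellation statement internal to the spectral category $\bb{F}\text{r}$, together with compatibility of the Nisnevich local structure with the $\bb G_m$-stabilization. One cannot simply quote the cancellation theorem of \cite{GP4} because the source lives in $\text{D}\bb{F}\text{r}_-(k)$ rather than in $SH^{mot}(k)$; the comparison has to be made internally, which is why the author defers the proof to a sequel where the equivalence $SH^{mot}(\bb{F}\text{r})\simeq SH^{mot}(k)$ hinted at in the introduction plays a central role.
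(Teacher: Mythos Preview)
The paper does not actually contain a proof of Theorem~\ref{VeryMain}: immediately before the statement it says ``A proof of the following result is postponed to the next preprint. It can be given in the spirit of the proofs as in \cite[Section~2]{GP5}.'' There is therefore nothing in the paper to compare your argument against beyond that one-line pointer to the methods of~\cite{GP5}. Your outline is broadly in that spirit --- reduce to generators $X\in Sm/k$, invoke the framed motives theorem of~\cite{GP4} and the cancellation theorem of~\cite{AGP} --- and you are right that the delicate point is the internal $\bb G_m$-cancellation step and its interaction with the Nisnevich-local model structure on $\Mod\bb{F}\text{r}$, which is precisely why the author defers.

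That said, your Stage~2 has a genuine gap. You claim that $\bb M^{\bb{F}\text{r}}$ and $\bb M_{\text{SH}}$ are ``composites of Quillen adjoint pairs'' and hence derived adjoints. But $\bb M^{\bb{F}\text{r}} = C_*\circ\Omega^{\infty}_{\bb{F}\text{r}(1)}\circ\Phi_*$ mixes a left adjoint ($\Phi_*$), a right adjoint ($\Omega^{\infty}$), and a left adjoint ($C_*$); likewise $\bb M_{\text{SH}} = \Phi^*\circ\Sigma^{\infty}_{\bb{F}\text{r}(1)}$ mixes a right adjoint with a left adjoint. Such mixed composites are not in general adjoint to one another, and the displayed $\Hom$-isomorphism does not follow formally. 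The route taken in~\cite{GP5} is not to exhibit these particular functors as an adjoint pair but rather to compare both sides to an intermediate category of framed bispectra and to argue directly that each composite is naturally isomorphic to the identity on generators; the adjunction you need between the stabilized categories is $(\Sigma^{\infty}\circ\Phi_*)\dashv(\Phi^*\circ\Omega^{\infty})$ (or its variants), and the passage to the effective subcategories via $C_*$ and $i$ must be handled separately. Your Stage~3 computations on generators are the right inputs, but you should not rely on the formal adjunction of Stage~2 to propagate from generators to the whole category; instead use that both functors are triangulated, preserve the relevant coproducts, and that the generators are compact, so that a natural transformation which is an isomorphism on generators is an isomorphism throughout.
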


\end{document}